\newtheorem{theorem}{Theorem}[section]
\newtheorem{lemma}[theorem]{Lemma}
\newtheorem{corollary}[theorem]{Corollary}
\theoremstyle{definition}
\newtheorem{example}[theorem]{Example}
\numberwithin{equation}{section}
\newcommand{\set}[1]{\left\{\,#1\,\right\}}  
\newcommand{\with}{\ \vrule\ }  
\newcommand{\defa}{:=} 
\newcommand{\NN}{\mathbb{N}}
\newcommand{\ZZ}{\mathbb{Z}}
\newcommand{\QQ}{\mathbb{Q}}
\newcommand{\kk}{\mathbb{K}}
\newcommand{\LS}{\ZZ[\![t]\!][t^{-1}]}
\newcommand{\nt}{\tilde{h}}
\newcommand{\kt}{\tilde{k}}
\newcommand{\Q}[1]{\mathcal{N}(#1)}
\begin{document}
\title[Hilbert series of modules over graded polynomial rings]{Hilbert series of modules over \\ positively graded polynomial rings}

\author{Lukas Katth\"an}

\address{Goethe-Universit\"at Frankfurt, Institut f\"ur Mathematik, 60325 Frankfurt am Main, Germany} \email{katthaen@math.uni-frankfurt.de}

\author{Julio Jos\'e Moyano-Fern\'andez}

\address{Universitat Jaume I, Campus de Riu Sec, Departamento de Matem\'aticas \& Institut Universitari de Matem\`atiques i Aplicacions de Castell\'o, 12071
Caste\-ll\'on de la Plana, Spain} \email{moyano@uji.es}

\author{Jan Uliczka}

\address{Universit\"at Osnabr\"uck, FB Mathematik/Informatik, 49069
Osnabr\"uck, Germany} \email{juliczka@uos.de}

\subjclass[2010]{Primary: 13D40; Secondary:  05E40.}
\keywords{Generating function, finitely generated module, Hilbert series, graded polynomial ring}
\thanks{The first and second authors were partially supported by  
the German Research Council DFG-GRK~1916. The second author was further supported by the Spanish Government Ministerio de Econom\'ia y Competitividad (MINECO), grants MTM2012-36917-C03-03 and MTM2015-65764-C3-2-P, as well as by Universitat Jaume I, grant P1-1B2015-02}

\begin{abstract}
In this note, we give examples of formal power series satisfying certain conditions that cannot be realized as Hilbert series of finitely generated modules.
This answers to the negative a question raised in a recent article by the second and the third author.
On the other hand, we show that the answer is positive after multiplication with a scalar.
\end{abstract}

\maketitle

\section{Introduction}

Let $\kk$ be a field, and let $R=\kk[X_1, \ldots , X_n]$ be the positively $\ZZ$-graded polynomial ring with $\deg{X_i}=d_i\geq 1$ for every $i=1,\ldots ,n$. Consider a finitely generated graded $R$--module 
$M=\bigoplus_k M_k$ over $R$. The graded components $M_k$ of $M$ are finitely dimensional $\kk$-vector spaces, and, since $R$ is positively graded, $M_k=0$ for $k \ll 0$.
The formal Laurent series
\[
H_M(t):=\sum_{k \in \ZZ} (\dim_K M_k)t^k \in \LS
\]
is called the Hilbert series of $M$.
Obviously every coefficient of this series is nonnegative.
Moreover, it is well-known that $H_M(t)$ can be written as a rational function with denomi\-nator $(1-t^{d_1})\dotsm(1-t^{d_n})$. 
In fact, in the standard graded case (i.e. $d_1 = \dotsb = d_n = 1$) these two properties characterize the Hilbert series of finitely generated $R$-modules among the formal Laurent series $\LS$, cf. Uliczka \cite[Cor. 2.3]{U}. 

In the non-standard graded case, the situation is more involved. A characterization of Hilbert series was obtained  by the second and third author in \cite{MU}:
\begin{theorem}[Moyano-Uliczka]\label{thm:jan}
Let $P(t) \in \LS$ be a formal Laurent series which is rational with denominator $(1-t^{d_1})\dotsm(1-t^{d_n})$.
Then $P$ can be realized as Hilbert series of some finitely generated $R$-module if and only if it can be written in the form
\begin{equation} \label{2eq1}
	P(t) = \sum_{I \subseteq \{1, \ldots , n\}} \frac{Q_I(t)}{\prod_{i \in I} \left(1-t^{d_i} \right)} 
\end{equation}
with nonnegative  $Q_I \in \ZZ[t,t^{-1}]$.
\end{theorem}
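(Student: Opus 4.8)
The plan is to prove the two implications of the theorem separately: the \qq{if} direction by an explicit construction, and the \qq{only if} direction by induction on the number of variables $n$. For the \qq{if} direction, suppose $P$ is written as in~\eqref{2eq1}. For each $I\subseteq\{1,\dots,n\}$ I set $R_I\defa R/(X_j\with j\notin I)$, which is a cyclic finitely generated graded $R$-module, isomorphic as a graded ring to the polynomial ring in the variables $X_i$ with $i\in I$, so that $H_{R_I}(t)=\prod_{i\in I}(1-t^{d_i})^{-1}$. Writing each $Q_I(t)=\sum_j c_{I,j}t^j$ with all $c_{I,j}\in\NN$ and only finitely many of them nonzero, the finite direct sum $M\defa\bigoplus_I\bigoplus_j R_I(-j)^{c_{I,j}}$ (with possibly negative shifts $-j$, which do not affect finite generation) is a finitely generated graded $R$-module, and additivity of the Hilbert series over direct sums gives $H_M(t)=\sum_I\sum_j c_{I,j}t^j\,H_{R_I}(t)=P(t)$.

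For the \qq{only if} direction I induct on $n$. The case $n=0$ is immediate, since a finitely generated $\kk$-module is finite dimensional, so $H_M$ is a Laurent polynomial with nonnegative coefficients and one takes $Q_\emptyset\defa P$. For the inductive step, put $R'\defa R/(X_n)=\kk[X_1,\dots,X_{n-1}]$ and let $T\subseteq M$ be the submodule of $X_n$-torsion elements. As $M$ is Noetherian, $T$ is finitely generated and $X_n^K T=0$ for some $K\geq 0$; from $0\to T\to M\to M/T\to 0$ we get $H_M=H_T+H_{M/T}$, so it suffices to produce a representation of the form~\eqref{2eq1} for $H_T$ and for $H_{M/T}$ and add the numerators. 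For $H_T$ I use the finite filtration $T\supseteq X_nT\supseteq\dots\supseteq X_n^KT=0$, whose successive quotients are finitely generated $R'$-modules (being annihilated by $X_n$); applying the inductive hypothesis to each quotient — its Hilbert series being rational with denominator $(1-t^{d_1})\dots(1-t^{d_{n-1}})$ as recalled in the introduction — and summing yields a representation of $H_T$ indexed by subsets of $\{1,\dots,n-1\}$. For $H_{M/T}$, write $\bar M\defa M/T$; then $X_n$ is a nonzerodivisor on $\bar M$, so $0\to\bar M(-d_n)\xrightarrow{X_n}\bar M\to\bar M/X_n\bar M\to 0$ is exact and hence $(1-t^{d_n})\,H_{\bar M}(t)=H_{\bar M/X_n\bar M}(t)$. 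Since $\bar M/X_n\bar M$ is a finitely generated $R'$-module, the inductive hypothesis expresses $H_{\bar M/X_n\bar M}$ as in~\eqref{2eq1} with index sets $I\subseteq\{1,\dots,n-1\}$, and dividing by $1-t^{d_n}$ simply replaces each such $I$ by $I\cup\{n\}$ while keeping all numerators nonnegative. Adding the two representations gives one for $H_M$.

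The main obstacle will be the \qq{only if} direction, and specifically the choice of the right exact sequence. The naive attempt uses the four-term sequence $0\to(0:_M X_n)(-d_n)\to M(-d_n)\xrightarrow{X_n}M\to M/X_nM\to 0$, but this only yields $(1-t^{d_n})\,H_M(t)=H_{M/X_nM}(t)-t^{d_n}\,H_{(0:_M X_n)}(t)$, and the subtraction destroys the nonnegativity of the numerators furnished by the inductive hypothesis. Splitting off the $X_n$-torsion submodule is precisely the fix: on the torsion part the short exact sequence is replaced by a finite $X_n$-adic filtration, which involves only sums, while on the torsion-free quotient the four-term sequence collapses to a three-term one with a clean, subtraction-free conclusion. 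The remaining points — that $T$, the filtration quotients, and $\bar M/X_n\bar M$ are finitely generated, and that the latter two are killed by $X_n$ so that the inductive hypothesis for $n-1$ variables genuinely applies — are routine consequences of Noetherianity of $R$.
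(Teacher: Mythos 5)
This theorem is quoted in the paper as a result of Moyano-Fern\'andez and Uliczka from reference \cite{MU}; the present paper does not reprove it, so there is no in-paper argument to compare against. Your proposal is a correct, self-contained proof and follows what is essentially the standard route for such realization theorems.

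The \qq{if} direction is handled exactly as one would expect: $R_I = R/(X_j \with j\notin I)$ has Hilbert series $\prod_{i\in I}(1-t^{d_i})^{-1}$, twists account for the monomials of $Q_I$, and additivity over direct sums finishes it; the remark that negative shifts are harmless for finite generation is worth including, as you did.

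The \qq{only if} direction is where the substance lies, and your diagnosis of the obstacle is accurate: the four-term sequence $0\to (0:_M X_n)(-d_n)\to M(-d_n)\to M\to M/X_nM\to 0$ produces a subtraction that cannot be controlled. Your fix — split off the $X_n$-torsion submodule $T$, treat $T$ by the finite filtration $T\supseteq X_nT\supseteq\dotsb\supseteq X_n^KT=0$ whose quotients are finitely generated over $R'=R/(X_n)$, and treat $M/T$ by the genuine short exact sequence coming from $X_n$ being a nonzerodivisor — is correct. One small bookkeeping point you gloss over but which is harmless: the index sets coming from $T$ lie in $\{1,\dotsc,n-1\}$ while those coming from $M/T$ all contain $n$ after dividing by $1-t^{d_n}$, so the two families are in fact disjoint and simply concatenate. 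All the finiteness assertions ($T$ finitely generated, $X_n^K T = 0$ for some $K$, the filtration quotients and $\bar M/X_n\bar M$ finitely generated over $R'$) follow from Noetherianity as you say. I see no gap.
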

However, it remained an open question in \cite[Remark 2.3]{MU} if the condition of the Theorem is satisfied by \emph{every} rational function with the given denominator and nonnegative coefficients. In this paper we answer this question to the negative.
In Section \ref{section:examples} we provide examples of rational functions that do not admit a decomposition \eqref{2eq1} and are thus not realizable as Hilbert series.
On the other hand, we show the following in Corollary \ref{cor:rat} and Theorem \ref{thm:2var}:
\begin{theorem}
	Assume that the degrees $d_1, \dotsc, d_n$ are pairwise either coprime or equal.
	Then the following holds:
	\begin{enumerate}
	\item If $n=2$, then every rational function $P(t) \in \LS$ with the given denominator and nonnegative coefficients admits a decomposition as in \eqref{2eq1}
	\item In general, the same still holds \emph{up to multiplication with a scalar}.
	\end{enumerate}
\end{theorem}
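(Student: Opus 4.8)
The plan is to settle the case $n=2$ directly, and then to bootstrap to general $n$ by the same ``peeling'' scheme, paying a scalar.

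\textbf{Two variables.} Put $a=d_1$, $b=d_2$. If $a=b=:d$, I would split $P$ by residues mod $d$: writing $P(t)=\sum_{r=0}^{d-1}t^r P_r(t^d)$ with $P_r(s):=\sum_k\big([t^{dk+r}]P\big)s^k$, each $P_r\in\LS$ has nonnegative coefficients and denominator $(1-s)^2$, so by the standard graded case \cite[Cor.~2.3]{U} (equivalently Theorem~\ref{thm:jan} with unit degrees) it decomposes as $A_r(s)+B_r(s)/(1-s)+C_r(s)/(1-s)^2$ with $A_r,B_r,C_r$ nonnegative; substituting $s=t^d$, multiplying by $t^r$ and summing over $r$ gives \eqref{2eq1}. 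So assume $\gcd(a,b)=1$ and write $P=K(t)/\big((1-t^a)(1-t^b)\big)$, $K\in\ZZ[t,t^{-1}]$. Since $\lim_{t\to1}(1-t)^2P(t)=K(1)/(ab)$ is a limit of nonnegative numbers, $K(1)\ge0$. The first step is to peel off $K(1)$ shifted free modules: comparing the eventual quasi-polynomials of both sides, for $m$ large enough $\widetilde P:=P-K(1)t^m/\big((1-t^a)(1-t^b)\big)$ still has nonnegative coefficients (the subtracted series grows like $(K(1)/ab)N$, matching the leading term of $P$, and $m$ is taken so large that the periodic correction cannot spoil positivity). Its numerator $K(t)-K(1)t^m$ is divisible by $1-t$, so $\widetilde P$ has only simple poles, all at roots of $1-t^a$ or of $1-t^b$; hence $\widetilde P$ is eventually periodic with period dividing $ab$, with eventual values $\widetilde q_r\ge0$, $r\in\ZZ/ab$.

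\textbf{Splitting the periodic tail and assembling.} Identifying $\ZZ/ab\cong\ZZ/a\times\ZZ/b$, I claim $\widetilde q_r=\alpha_{r\bmod a}+\beta_{r\bmod b}$ for suitable $\alpha_i,\beta_j\in\ZZ_{\ge0}$. The point is a dimension count: the space of period-$ab$ data whose generating function has poles only at roots of $1-t^a$ or $1-t^b$ has dimension $a+b-1$, since the forbidden cyclotomic factors have total degree $(a-1)(b-1)$; on the other hand the period-$a$ data and the period-$b$ data span subspaces of dimensions $a$ and $b$ meeting in the line of constants, so together they already span everything, giving such a decomposition over $\QQ$. The additive ambiguity $\alpha\mapsto\alpha+s$, $\beta\mapsto\beta-s$ can then be used to make $\alpha,\beta$ nonnegative integers: the admissible $s$ form an interval with integer endpoints of length $\min_r\widetilde q_r\ge0$, hence containing an integer. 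Setting $B(t):=t^{abM}\sum_{i=0}^{a-1}\alpha_it^i$ and $C(t):=t^{abM}\sum_{j=0}^{b-1}\beta_jt^j$ for $M$ so large that $abM$ exceeds the pre-periodic range of $\widetilde P$, the difference $A(t):=\widetilde P-B/(1-t^a)-C/(1-t^b)$ is a Laurent polynomial agreeing with $\widetilde P$ below degree $abM$ and vanishing above, hence has nonnegative coefficients, and
\[
P(t)=A(t)+\frac{B(t)}{1-t^a}+\frac{C(t)}{1-t^b}+\frac{K(1)t^m}{(1-t^a)(1-t^b)}
\]
is a decomposition of the form \eqref{2eq1}.

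\textbf{General $n$, up to a scalar.} It suffices to show that every admissible $P$ lies in the $\QQ_{\ge0}$-cone generated by the atoms $t^m/\prod_{i\in I}(1-t^{d_i})$; clearing denominators then puts $cP$ in the form \eqref{2eq1} for a suitable positive integer $c$. I would iterate the peeling step: subtract nonnegative rational multiples of shifted atoms to lower, one root of unity at a time, the order of the pole of the current remainder, the multiplicities coming from linear systems over $\QQ$ whose solvability is again ensured by dimension counts for the relevant cyclotomic factorizations (this is where the hypothesis that the $d_i$ are pairwise coprime or equal keeps the bookkeeping clean). Throughout, one only performs subtractions that keep all coefficients nonnegative---possible because the relevant leading quasi-polynomial coefficient of the remainder is positive (as $P$ itself has nonnegative coefficients) and because one may push the atoms' supports to arbitrarily high degree, so that, as in the two-variable case, the eventual behaviour is matched while a Laurent-polynomial piece carries the transient part. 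Once all poles are removed, the remainder is a Laurent polynomial with nonnegative coefficients, completing the decomposition over $\QQ_{\ge0}$.

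\textbf{Main obstacle.} The crux---and the reason the scalar cannot be dropped when $n\ge3$---is precisely the analogue of the splitting step: the admissible quasi-polynomial ``tails'' of degree $\le n-1$ are nonnegative \emph{rational} combinations of the quasi-polynomials realized by atoms, but in general not nonnegative \emph{integral} ones. For $n=2$ the gap closes thanks to the numerical coincidence $\dim\{\text{admissible}\}=a+b-1=\dim(\{\text{period }a\}+\{\text{period }b\})$ together with the integrality of $K(1)$; for $n\ge3$ the linear systems that occur genuinely have non-integral solutions, which is exactly the phenomenon the examples of Section~\ref{section:examples} isolate.
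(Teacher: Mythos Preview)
Your two-variable argument is essentially the paper's: peeling off $K(1)\,t^m/\big((1-t^a)(1-t^b)\big)$ is exactly the paper's reduction from $\deg Q(P)=1$ to $\deg Q(P)=0$ in Theorem~\ref{thm:2var}, and your splitting of the periodic tail $\widetilde q_r=\alpha_{r\bmod a}+\beta_{r\bmod b}$ into nonnegative integers is the $r=2$ case of Lemma~\ref{lem:periodic} (your dimension count plus CRT is the same mechanism; one small point to tighten is the phrase ``integer endpoints''---what actually happens is that all $\alpha_i$ share a common fractional part, so normalizing $\min_i\alpha_i=0$ forces everything integral).

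For general $n$ your sketch is in the right spirit but is not yet a proof, and it glosses over a genuine case distinction the paper makes explicit. Working ``one root of unity at a time'' is problematic because every atom $t^m/\prod_i(1-t^{\delta_i})^{\beta_i}$ has a pole at $t=1$ as well, so you cannot adjust the poles independently. The paper instead inducts on the degree $\beta-1$ of the quasipolynomial $Q(P)$ and splits into two cases: if $\beta>\max_i\alpha_i$ the leading coefficient is constant and one subtracts a scalar multiple of a \emph{product} atom $1/\prod_i(1-t^{\delta_i})^{\beta_i}$ with $\sum\beta_i=\beta$ (no single factor has high enough pole order); if $\beta\le\max_i\alpha_i$ one invokes Lemma~\ref{lem:periodic} to split the periodic leading coefficient and then Lemma~\ref{lem:construct} to realize each piece. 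Your ``dimension counts for the relevant cyclotomic factorizations'' does not yet capture this, and in particular does not explain why the subtracted atom can always be chosen with $\beta_i\le\alpha_i$.
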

In particular, there is a formal Laurent series $P(t)$ with integral coefficients such that $2P(t)$, but not $P(t)$,  is the Hilbert series of a finitely generated graded $R$--module, cf.~Exam\-ple \ref{ex3.1}.
Moreover, we will provide an example (Example \ref{ex3.3}) showing that the conclusion does not hold without the assumption on the degrees being pairwise coprime.

\section{Proofs of the main results}
As general references for further details about Hilbert series the reader is referred to Bruns and Herzog~ \cite{BH}. 
Furthermore, we are going to use some well-known facts about quasipolynomials and power series expansions of rational functions. For details about these topics, we refer the reader 
to Chapter 4 of Stanley~\cite{stanley}.

We first show three lemmas before we present the proof of our main results.
The following notation will be useful.
For $\delta \in \NN$ and $0 \leq j \leq \delta -1$ set
	\[ e_{\delta,j}(h) \defa \begin{cases}
	1 &\text{ if } h \equiv j \mod \delta, \\
	0 & \text{ otherwise}.
	\end{cases}\]
Obviously, the functions $e_{\delta,0},\dotsc,e_{\delta,\delta-1}$ form a basis of the space of $\delta$-periodic functions $\NN\rightarrow\QQ$.

\begin{lemma}\label{lem:periodic}
	Let $c_1, \dotsc, c_r: \NN \rightarrow \QQ$ be periodic functions of periods $\delta_1, \dotsc, \delta_r$, such that their sum takes nonnegative values.
	Then there exist nonnegative periodic functions $\tilde{c}_1, \dotsc, \tilde{c}_r: \NN \rightarrow \QQ$ of the same periods such that $\sum_i c_i = \sum_i \tilde{c}_i$.
	Moreover, if the sum of the $c_i$ takes nonnegative integral values, then the $\tilde{c}_i$ can be chosen to be integral valued. 
\end{lemma}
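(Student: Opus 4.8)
The plan is to prove this by induction on $r$, the number of periodic functions. The base case $r=1$ is trivial: if $c_1$ itself takes nonnegative values, set $\tilde{c}_1 = c_1$. For the inductive step, the key idea is that a function which is periodic with period $\delta_1 \dotsm \delta_r$ (the lcm, which equals the product since the periods are pairwise coprime) can be uniquely decomposed into a sum $\sum_i c_i$ with $c_i$ of period $\delta_i$ only after fixing normalizations; the flexibility we have is to shift a constant between the summands. So the crux is to reduce the general statement to the case $r=2$, or to directly argue: given $f \defa \sum_i c_i \geq 0$ with $f$ periodic of period $\Delta = \prod_i \delta_i$, we want to rewrite it using $r$ nonnegative periodic pieces.

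Here is the approach I would carry out for the two-function heart of the argument (say $r=2$, periods $\delta_1, \delta_2$ coprime, $f = c_1 + c_2 \geq 0$). By the Chinese Remainder Theorem, every residue class mod $\Delta = \delta_1\delta_2$ corresponds to a unique pair of residues $(a \bmod \delta_1, b \bmod \delta_2)$. Think of $f$ as a $\delta_1 \times \delta_2$ matrix $F$ whose $(a,b)$ entry is the value of $f$ on the corresponding residue class; then $c_1$ being $\delta_1$-periodic means its matrix has constant rows, and $c_2$ being $\delta_2$-periodic means constant columns. So $F_{a,b} = u_a + v_b$ for the row/column values $u_a, v_b$, and we want new values $\tilde u_a \geq 0$, $\tilde v_b \geq 0$ with $\tilde u_a + \tilde v_b = F_{a,b}$. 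Set $\tilde u_a \defa u_a + \min_b v_b$ and $\tilde v_b \defa v_b - \min_{b'} v_{b'}$; then $\tilde v_b \geq 0$ by construction, $\tilde u_a + \tilde v_b = u_a + v_b = F_{a,b} \geq 0$, and taking $b$ achieving the minimum of $v$ shows $\tilde u_a = F_{a, b_0} \geq 0$. This settles $r=2$, and the integrality addendum is immediate since the shift $\min_b v_b$ is among the values $v_b$, which differ from the (integral) entries $F_{a_0,b}$ by the integer $u_{a_0}$ — one only needs one of the original $c_i$ to be normalized to take integral values, which can be arranged at the outset.

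For general $r$, I would peel off one function at a time: write $f = c_1 + (c_2 + \dotsb + c_r)$, where $g \defa c_2 + \dotsb + c_r$ is periodic of period $\delta_2 \dotsm \delta_r$, which is coprime to $\delta_1$. The $r=2$ argument gives nonnegative $\tilde c_1$ (period $\delta_1$) and nonnegative $\tilde g$ (period $\delta_2\dotsm\delta_r$) with $c_1 + g = \tilde c_1 + \tilde g$. Now $\tilde g \geq 0$ is a sum of $r-1$ periodic functions of pairwise coprime periods $\delta_2, \dotsc, \delta_r$ — but wait, I need $\tilde g$ expressed as such a sum. It is: $\tilde g = g + (\text{a constant})$, and $g = c_2 + \dotsb + c_r$, so absorb the constant into any one summand, say replace $c_2$ by $c_2 + \text{const}$. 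Then apply the induction hypothesis to $\tilde g$ to get nonnegative $\tilde c_2, \dotsc, \tilde c_r$ of the right periods summing to $\tilde g$, hence $\tilde c_1 + \tilde c_2 + \dotsb + \tilde c_r = f$ as required; integrality propagates through since the constants involved are differences of values of the integral-valued intermediate sums.

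**Main obstacle.** The genuine content is entirely in the $r=2$ case, and specifically in the observation that coprimality of the periods lets the Chinese Remainder Theorem turn "$\delta_1$-periodic plus $\delta_2$-periodic" into "constant rows plus constant columns of a matrix" — without coprimality this rank-one-type structure fails and the min-shift trick collapses. I expect the only thing needing care in writing it up is the bookkeeping when passing from $r$ to $r-1$: one must make sure the leftover constant can always be folded back into a legitimately $\delta_i$-periodic summand (which it can, trivially, since constants are periodic of every period), and that the integrality hypothesis is used in the right place, namely that it suffices for the \emph{sum} to be integral together with one summand being normalized integrally. Everything else is routine.
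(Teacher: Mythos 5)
Your proposal is correct and takes essentially the same approach as the paper: both shift the auxiliary summands down by their respective minima (absorbing the compensating constants into one distinguished summand) and then invoke the Chinese Remainder Theorem to locate an argument where all shifted summands vanish simultaneously, forcing nonnegativity of the remaining one. The only structural difference is that you organize this as an induction on $r$ via the $r=2$ matrix picture, whereas the paper performs all $r-1$ shifts at once; this is a cosmetic difference in bookkeeping, not in the underlying idea.
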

\begin{proof}
	Let us define the coefficients $\mu(i,j)$ by requiring
	\[ c_i = \sum_{j=0}^{\delta_i-1} \mu(i,j) e_{\delta_i,j}. \]
	For each $i>1$, let $m_i$ be the minimum of the $\mu({i,1}), \dotsc, \mu({i,\delta_i})$
	and choose a $k_i$ such that $m_i = \mu(i, k_i)$.
	\newcommand{\mut}{\tilde{\mu}}
	Set $\mut({i,j}) \defa \mu({i,j}) + m_i$ for $1 < i \leq r$, $\mut({1,j}) \defa \mu({1,j}) - \sum_i m_i$ 
	and define $\tilde{c}_i \defa \sum_{j=0}^{\delta_i-1} \mut(i,j) e_{\delta_i,j}$.
	Using the relation  
	\[
	\sum_{j=0}^{\delta-1} e_{\delta,j} = \sum_{j=0}^{\delta'-1} e_{\delta',j},
	\]
	which holds for all $\delta,\delta' \in \NN$, one easily sees that
	\begin{equation*} 
		\sum_{i=1}^r c_i = \sum_{i=1}^r \sum_{j=0}^{\delta_i - 1} \mu({i,j}) e_{\delta_i,j} = \sum_{i=1}^r \sum_{j=0}^{\delta_i - 1} \mut({i,j}) e_{\delta_i,j} = \sum_{i=1}^r \tilde{c}_i.
	\end{equation*}
	By construction we have $\mut({i,j}) \geq 0$ for $i > 1$ and all $j$, and we claim that also $\mut({1,j}) \geq 0$ for all $j$. To prove this, assume for contrary that there exists an index $j_0$ such that $\mut({1,j_0}) < 0$.
	Note that by construction $\mut(i, k_i) = 0$ for $1 < i \leq r$. By the Chinese remainder theorem there exists an $0 \leq h < \delta_1 \delta_2 \dotsm \delta_r$ such that $h \equiv j_0 \mod \delta_1$ and $h \equiv k_i \mod \delta_i$ for $i > 1$.
	Then 
	\begin{align*}
	\sum_{i=1}^r c_i(h) &= \sum_{i=1}^r \tilde{c}_i(h) \\
	&= \mut(1,j_0) + \mut(2, k_2) + \mut(3, k_3) + \cdots + \mut(r,k_r) \\
	&= \mut(1,j_0) < 0,
	\end{align*}
	contradicting the assumption.
	
	Now we turn to the case that $\sum_{i=1}^r c_i(h) \in \ZZ$ for all $h \in \NN$.
	By the same argument as above, for $1 \leq j \leq \delta_1-1$ there exists an $h \in \NN$ such that 
	$ \sum_{i=1}^r c_i(h) = \mut(1, j) $,	hence $\mut(1,j) \in \ZZ$ for all $j$.
	Further, for each $1 < i \leq r$ and each $1 \leq j \leq \delta_i - 1$, there exists an $h \in \NN$ such that $h \equiv j \mod \delta_i$ and $h \equiv k_\ell \mod \delta_\ell$ 
	for each $1\leq \ell \leq r$, with $\ell \neq i$.
	Thus $\sum_{i=1}^r c_i(h) = \mut(1, j_0) +\mut(i,j)$ for some $j_0$. It follows that $\mut(i,j) \in \ZZ$.
	We conclude that $\tilde{c}_i(h) \in \ZZ$ for all $1\leq i \leq r$ and all $h \in \NN$.
\end{proof}

\begin{lemma}\label{lem:construct}
	Let $c:\NN \rightarrow \QQ$ be a nonnegative periodic function of period $\delta \in \NN$.
	Then for any $\beta \in \NN$ there exists a polynomial $q \in \QQ[t]$ with nonnegative coefficients, such that the coefficient function of the series expansion of 
	\[ \frac{q(t)}{(1-t^\delta)^{\beta}} \]
	is a quasipolynomial of degree $\beta - 1$ whose leading coefficient equals $c$.
\end{lemma}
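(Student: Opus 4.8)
The plan is to reduce the statement to a direct computation, taking $q$ to be supported on the monomials $t^0, t^1, \dots, t^{\delta-1}$ with coefficients read off from $c$.

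First I would recall the standard expansion
\[ \frac{1}{(1-t^\delta)^\beta} = \sum_{k \geq 0}\binom{k+\beta-1}{\beta-1}t^{\delta k}, \]
so that for $0 \leq j \leq \delta-1$ we have $t^j/(1-t^\delta)^\beta = \sum_{k\geq 0}\binom{k+\beta-1}{\beta-1}t^{\delta k+j}$, whose coefficient function sends $n$ with $n \equiv j \bmod \delta$ to $\binom{(n-j)/\delta + \beta-1}{\beta-1}$ and vanishes on all other residue classes. Since $\binom{k+\beta-1}{\beta-1}$ is a polynomial in $k$ of degree $\beta-1$ with leading coefficient $1/(\beta-1)!$, the substitution $k=(n-j)/\delta$ shows that on the residue class $j \bmod \delta$ this coefficient function is a polynomial in $n$ of degree $\beta-1$ with leading coefficient $\tfrac{1}{(\beta-1)!\,\delta^{\beta-1}}$. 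In other words, the coefficient function of $t^j/(1-t^\delta)^\beta$ is a quasipolynomial of degree $\beta-1$ whose degree-$(\beta-1)$ coefficient is the periodic function $\tfrac{1}{(\beta-1)!\,\delta^{\beta-1}}\,e_{\delta,j}$.

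Next I would set
\[ q(t) \defa (\beta-1)!\,\delta^{\beta-1}\sum_{j=0}^{\delta-1} c(j)\,t^j. \]
Because $c$ takes nonnegative values, all coefficients of $q$ are nonnegative, so $q \in \QQ[t]$ is admissible. By linearity of the series expansion and the previous paragraph, the coefficient function of $q(t)/(1-t^\delta)^\beta$ is a quasipolynomial of degree $\beta-1$ whose leading coefficient is $\sum_{j=0}^{\delta-1} c(j)\,e_{\delta,j}$, and this equals $c$ since $c$ has period $\delta$. That completes the argument.

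I do not expect any serious obstacle: the lemma is essentially a bookkeeping statement, and all the work is in correctly identifying the leading term of $\binom{(n-j)/\delta+\beta-1}{\beta-1}$ as a polynomial in $n$. The only delicate point worth a remark is the degenerate case in which $c$ vanishes on some (or all) residue classes: on such a class the corresponding polynomial genuinely has degree smaller than $\beta-1$, so the phrase ``quasipolynomial of degree $\beta-1$ whose leading coefficient equals $c$'' should be read as ``the coefficient of $n^{\beta-1}$ is the function $c$'', which is exactly what the construction above produces.
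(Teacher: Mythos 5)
Your proof is correct and follows essentially the same route as the paper: choose $q$ supported on $t^0,\dotsc,t^{\delta-1}$ with nonnegative coefficients proportional to the values of $c$, using the binomial expansion of $1/(1-t^\delta)^\beta$ to identify the leading coefficient. In fact your normalization constant $(\beta-1)!\,\delta^{\beta-1}$ is the correct one (the $\delta^{\beta-1}$ comes from rewriting $\binom{(n-j)/\delta + \beta-1}{\beta-1}$ as a polynomial in $n$), whereas the paper writes only $(\beta-1)!$ and states the leading coefficient of $t^i/(1-t^\delta)^\beta$ as $(1/(\beta-1)!)e_{\delta,i}$ without the factor $\delta^{-(\beta-1)}$; this slip is harmless since only the existence of such a nonnegative $q$ is needed, but your version is the more careful one.
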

\begin{proof}
	Write $c = \sum_i c_i e_{\delta,i}$ with $c_i \in \QQ$ nonnegative.
	Recall that the coefficient function of
	\[
	\frac{t^i}{(1-t^\delta)^{\beta}} = \sum_{h \geq 0} \binom{h + \beta - 1}{\beta -1} t^{\delta h+i}
	\] 
	is a quasipolynomial of degree $\beta -1$ with leading coefficient function 
\[
\frac{1}{\delta^{\beta-1}(\beta-1)!} e_{\delta,i}.
\]
	So the polynomial $q(t) \defa \delta^{\beta-1} (\beta-1)! \sum_{i=0}^{\delta-1} c_i t^i$ satisfies the claim.
\end{proof}

\begin{lemma}\label{lem:shift}
	Let $p_1, p_2$ be two quasipolynomials of the same period and the same degree. Assume moreover that the leading coefficient function of $p_1$ is nonnegative and greater than or 
	equal to the leading coefficient function of $p_2$. Then there exists a $k \in \NN$ such that $p_1(h) - p_2(h-k) \geq 0$ for all $h \geq k$.
\end{lemma}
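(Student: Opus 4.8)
The plan is to reduce the statement to a single statement about the growth of the difference $p_1(h) - p_2(h-k)$, and to exploit that shifting a quasipolynomial by $k$ perturbs it only in lower-order terms while leaving the leading coefficient function unchanged. Write $\delta$ for the common period and $d$ for the common degree of $p_1,p_2$, and let $\ell_1,\ell_2\colon\NN\to\QQ$ denote their leading coefficient functions, so that $\ell_1 \geq \ell_2 \geq 0$ pointwise by hypothesis. The key observation is that for a quasipolynomial $p_2$ of period $\delta$ and degree $d$, the shifted function $h \mapsto p_2(h-k)$ is again a quasipolynomial of period $\delta$ (since $h-k \equiv h - k \bmod \delta$, the residue class is merely relabelled) and of degree $d$, and crucially its leading coefficient function is still $\ell_2$ (the top-degree behaviour $\tfrac{\ell_2}{1}h^d$ is unaffected by a bounded shift). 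Hence the difference $D_k(h) \defa p_1(h) - p_2(h-k)$ is a quasipolynomial of period $\delta$ whose degree-$d$ coefficient function is $\ell_1 - \ell_2 \geq 0$.

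Next I would split into two cases according to whether $\ell_1 - \ell_2$ vanishes identically or not. If $\ell_1 - \ell_2 \not\equiv 0$, then on at least one residue class mod $\delta$ the leading coefficient of $D_k$ is strictly positive; but the lower-order coefficients of $D_k$, while depending on $k$, do so in a way that lets us make the constant (degree-$0$) coefficient as large as we like --- concretely, subtracting the Taylor-type expansion of $p_2(h-k)$ in powers of $k$ shows that the coefficient of $h^{d-1}$ in $D_k$ is (leading coeff of $p_1$ minus that of $p_2$ at $h^{d-1}$) plus $k \cdot d \cdot \ell_2(\cdot)$, which is nonnegative for large $k$ on every residue class; iterating, one arranges all coefficients of $D_k$ below the top to be nonnegative for $k$ large enough. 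Then $D_k(h) \geq 0$ for all $h \geq 0$, in particular for $h \geq k$. If instead $\ell_1 - \ell_2 \equiv 0$, then $p_1$ and $p_2$ have the same leading coefficient function, and a cleaner argument is available: it suffices to choose $k$ so large that $p_2(h-k) \leq p_1(h)$ for all $h \geq k$, which follows because $p_1(h) - p_2(h)$ is a quasipolynomial of degree $\leq d-1$, hence bounded below by $-C h^{d-1}$ for some constant $C$, while $p_2(h) - p_2(h-k) \geq c' k h^{d-1}$ for large $h$ once $\ell_2 > 0$ (and the degenerate sub-case $\ell_2 \equiv 0$ drops the degree and is handled by induction on $d$).

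The cleanest uniform way to run both cases at once is probably to prove the following auxiliary claim by induction on the degree $d$: \emph{if $q$ is a quasipolynomial of period $\delta$ and degree $\leq d$ whose degree-$d$ coefficient function is nonnegative, then there is a $k \in \NN$ with $q(h) + (\text{a correction of degree} \leq d-1) \geq 0$ for $h \geq k$} --- more precisely, I would show that $p_1(h) - p_2(h-k)$ can be written as $\ell_1(h)\,\binom{h}{d}$-type leading term plus a quasipolynomial of degree $\leq d-1$ in which the coefficient of the top surviving power can be forced nonnegative by taking $k$ large, and then invoke the inductive hypothesis on that lower-degree remainder. The base case $d = 0$ is immediate since $p_1, p_2$ are then $\delta$-periodic with $p_1 \geq p_2 \geq 0$, so $k = 0$ works (shifting a periodic function by a multiple of $\delta$ changes nothing, but one takes $k$ a multiple of $\delta$, or simply notes $p_2(h-k) = p_2(h)$ is false in general for non-multiples --- so instead take $k$ a multiple of $\delta$, which is harmless).

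The main obstacle is bookkeeping: a quasipolynomial is a polynomial with $\delta$-periodic coefficient functions, and shifting its argument by $k$ mixes the degrees through binomial expansion while also permuting the residue classes on which the periodic coefficients live. Getting the signs of all $d$ subleading coefficient functions to come out nonnegative simultaneously, uniformly over the (finitely many) residue classes, by a single choice of $k$, is the technical heart; but since there are only finitely many residue classes and finitely many coefficient functions, and each becomes eventually nonnegative as $k \to \infty$ once the one above it is fixed, a finite maximum of the required thresholds does the job. I would therefore present the argument as a downward induction on the degree, at each step absorbing the shift-induced correction into the next-lower coefficient function and choosing $k$ to dominate the finitely many negative values that arise.
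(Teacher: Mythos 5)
Your plan identifies several correct ingredients (take $k$ a multiple of the common period $\delta$, use binomial expansion, try to force coefficient-wise nonnegativity), but the central claim --- that \emph{all} coefficients of $D_k(h) = p_1(h) - p_2(h-k)$, viewed as a quasipolynomial in $h$, become nonnegative for $k$ large --- is false, and this breaks the argument. The problem is the alternating signs in the expansion of $(h-k)^i$: with $k$ a multiple of $\delta$ so that $b_i(h-k)=b_i(h)$, the coefficient of $h^{d-1}$ in $D_k$ is $a_{d-1}(h)-b_{d-1}(h)+dk\,\ell_2(h)$, which indeed grows with $k$, but the coefficient of $h^{d-2}$ is $a_{d-2}(h)-b_{d-2}(h)+(d-1)k\,b_{d-1}(h)-\binom{d}{2}k^2\ell_2(h)$, whose dominant term in $k$ is $-\binom{d}{2}k^2\ell_2(h)\to -\infty$. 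The same sign-alternation recurs at every even depth, so there is no single large $k$ that makes all coefficients nonnegative, and the ``iterating'' / ``finite maximum of thresholds'' step collapses. (For a concrete check: $p_1=p_2=h^2$ gives $D_k(h)=2kh-k^2$, whose constant term $-k^2$ is hopelessly negative; $D_k(h)\geq 0$ holds only for $h\geq k/2$, so the conclusion ``$D_k(h)\geq 0$ for all $h\geq 0$'' you draw is too strong and untrue.)

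The missing idea, which is what the paper's proof hinges on, is to reparametrize by $\tilde h = h-k$ and expand $p_1(\tilde h + k)-p_2(\tilde h)$ in powers of $\tilde h$, rather than expanding $p_2(h-k)$ in powers of $h$. In that form, $(\tilde h + k)^i$ has \emph{all nonnegative} binomial terms, so the coefficient of $\tilde h^{\,j}$ is $\sum_{i\geq j}\binom{i}{j}k^{i-j}a_i(\tilde h)-b_j(\tilde h)$, a polynomial in $k$ with leading coefficient proportional to $a_\ell(\tilde h)\geq 0$, hence eventually nonnegative; and nonnegativity of all these coefficients is exactly what you need, since $h\geq k$ corresponds to $\tilde h\geq 0$. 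Without that change of variable your inductive scheme is circular (the remainder you want to control by induction itself changes as you change $k$). A smaller point: you write ``$\ell_1\geq\ell_2\geq 0$ by hypothesis,'' but the lemma only assumes $\ell_1\geq 0$ and $\ell_1\geq\ell_2$; nonnegativity of $\ell_2$ is not given, which matters for your case distinction.
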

\begin{proof}
	Let $\delta \in \NN$ be the common period of $p_1$ and $p_2$.
	We only consider values of $k$ that are multiples of $\delta$, so we set $k = \kt \delta$.
	Let
	\[ p_1(h) = \sum_{i=0}^\ell a_i(h) h^i \qquad \text{ and } \qquad p_2(h) = \sum_{i=0}^\ell b_i(h) h^i. \]
	Let $\nt \defa h - \kt\delta$. We compute
	\begin{align*}
	p_1(h) - p_2(h-\kt\delta) &= p_1(\nt + \kt\delta) - p_2(\nt) \\
	&= \sum_{i=0}^{\ell} a_i(\nt+\kt\delta) (\nt + \kt\delta)^i - b_i(\nt) \nt^i \\
	&= \sum_{i=0}^{\ell} a_i(\nt) (\nt + \kt\delta)^i - b_i(\nt) \nt^i \\
	&= (a_\ell(\nt) - b_\ell(\nt))\nt^\ell + \sum_{i=0}^{\ell-1} \left( \sum_{j=i}^\ell \binom{j}{i} \kt^{j-i} \delta^{j-i} a_j(\nt) - b_i(\nt) \right) \nt^i.
	\end{align*}
	By assumption we have that $a_\ell(\nt) - b_\ell(\nt) \geq 0$.
	Further, we see that all other coefficient functions of $p_1(\nt + \kt\delta) - p_2(\nt)$ are non-constant polynomials in $\kt$ with leading coefficient $\binom{\ell}{i} \delta^{\ell - i} a_\ell(\nt) > 0$. Therefore all coefficient functions of $p_1(\nt + \kt\delta) - p_1(\nt)$ are nonnegative for $\kt \gg 0$.
	It follows that for a sufficiently large $\kt$, it holds that $p_1(\nt + \kt\delta) - p_2(\nt) \geq 0$ for all $\nt \geq 0$, and
	consequently  $p_1(h) - p_2(h-\kt\delta) \geq 0$ for all $h \geq \kt \delta$.
\end{proof}

Now we are ready to present and prove our main theorem. It shows that a decomposition as in Theorem \ref{thm:jan} is always possible if one allows \emph{rational} coefficients.
\begin{theorem}\label{thm:rat}
	Let $d_1, \dotsc, d_n$ be pairwise coprime or equal positive integer numbers.
	Let $P \in \LS$ be a nonnegative formal Laurent series which is rational with denominator $(1-t^{d_1})\dotsm(1-t^{d_n})$.
	Then it can be written in the form
	\begin{equation*} 
		P(t) = \sum_{I \subseteq \{1, \ldots , n\}} \frac{Q_I(t)}{\prod_{i \in I} \left(1-t^{d_i} \right)} 
	\end{equation*}
	with nonnegative $Q_I \in \QQ[t,t^{-1}]$.
\end{theorem}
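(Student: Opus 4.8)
My plan is to reduce the problem to the three lemmas by a partial-fraction analysis of $P$. Since $P$ is rational with denominator $(1-t^{d_1})\dotsm(1-t^{d_n})$, after grouping the equal degrees together, the denominator has the form $\prod_{e} (1-t^{e})^{m_e}$, where the product runs over the \emph{distinct} values $e$ among the $d_i$ and $m_e$ is the multiplicity of $e$. The coefficient function $h \mapsto \dim_{\kk} M_h$ of $P$, for $h \gg 0$, is then a quasipolynomial whose period divides $\prod_e e$. By the standard theory of rational generating functions (Stanley, Chapter 4), I can write this quasipolynomial uniquely in the form $\sum_e p_e$, where $p_e$ is a quasipolynomial of period (dividing) $e$ and degree $m_e - 1$; this is the decomposition coming from partial fractions, collecting the poles at the primitive $e$-th roots of unity. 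The key point — using the pairwise coprimality of the distinct degrees — is that periods that are coprime interact cleanly: the leading coefficient function of $p_e$ is the ``top-degree part at the $e$-adic poles''.

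The central claim I would then prove is: one can choose the $p_e$ so that the leading coefficient function of each $p_e$ is nonnegative. Indeed, the sum of these leading coefficient functions is (essentially) governed by $\sum_e \ell_e$, where $\ell_e$ is the leading coefficient function of $p_e$, a periodic function of period dividing $e$; and since the total quasipolynomial is nonnegative (because $P$ has nonnegative coefficients), after isolating the unique highest-degree contribution one sees that $\sum_e \ell_e$ (suitably interpreted on the appropriate arithmetic progressions) takes nonnegative values. Here is where Lemma~\ref{lem:periodic} enters: it lets me replace the $\ell_e$ by nonnegative periodic functions $\tilde\ell_e$ of the same periods with the same sum. Absorbing the difference $\ell_e - \tilde\ell_e$ into lower-degree terms (this is a bookkeeping step, moving a periodic function times a polynomial of smaller degree between the summands), I reduce to the case where every $p_e$ already has nonnegative leading coefficient function. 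One then iterates on the degree: subtract off, for each $e$, the quasipolynomial produced by Lemma~\ref{lem:construct} applied to $c = \ell_e$, $\delta = e$, $\beta = m_e$ — this has nonnegative leading coefficient function equal to $\ell_e$ — and the difference is a quasipolynomial of strictly smaller degree, to which the same argument applies. After finitely many steps one has written the quasipolynomial part of $P$ as $\sum_e \sum_{\beta \le m_e} q_{e,\beta}(t)/(1-t^{e})^{\beta}$ with nonnegative $q_{e,\beta} \in \QQ[t]$, which is a decomposition of the desired shape (with $I$ running over multisets of a single distinct degree; sets $I$ mixing different degrees are simply not needed here).

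Finally I must handle two discrepancies between ``quasipolynomial part for $h \gg 0$'' and ``the actual Laurent series $P$''. First, $P$ may have finitely many exceptional coefficients before the quasipolynomial behavior sets in, and a Laurent-polynomial tail; but a nonnegative Laurent polynomial is already of the form $Q_\emptyset(t)$ with $Q_\emptyset$ nonnegative, so this contributes harmlessly to the $I = \emptyset$ term — I only need to make sure the quasipolynomial piece I built agrees with $P$ from some point on, and correct the finitely many earlier terms by adding a nonnegative Laurent polynomial, which is possible after shifting the constructed series far enough to the right. This shifting is exactly Lemma~\ref{lem:shift}: it guarantees that replacing $q_{e,\beta}(t)$ by $t^{k} q_{e,\beta}(t)$ for a large enough $k$ keeps all coefficients of $P$ minus the constructed sum nonnegative for $h \ge k$, so that the residual difference is a nonnegative Laurent polynomial. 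I expect the main obstacle to be the degree-reduction bookkeeping: carefully tracking how the leading-coefficient adjustment from Lemma~\ref{lem:periodic} propagates into the lower-order coefficient functions across the different periods, and verifying that coprimality makes the ``evaluate on a compatible residue class'' trick (via the Chinese remainder theorem, as in the proof of Lemma~\ref{lem:periodic}) correctly extract each $\ell_e$ from the global nonnegativity of $P$. Once that extraction is justified, the rest is assembling the three lemmas.
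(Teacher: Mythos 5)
Your overall strategy (induction on quasipolynomial degree, Lemma~\ref{lem:periodic} to split the leading coefficient function, Lemma~\ref{lem:construct} to realize it, Lemma~\ref{lem:shift} to shift far enough to keep coefficients nonnegative) is the same as the paper's, but there is a genuine gap in your decomposition step, and it makes an entire case of the argument collapse.

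You claim that $Q(P)$ decomposes as $\sum_e p_e$, where $e$ ranges over the \emph{distinct} degrees, with $p_e$ of period dividing $e$ and degree $m_e - 1$, and you conclude from this that \emph{``sets $I$ mixing different degrees are simply not needed here.''} Both assertions are wrong. The pole of $P$ at $t = 1$ has order $n = \sum_e m_e$ (every factor $1 - t^{d_i}$ vanishes there), so the partial-fraction decomposition contains a block $\sum_{j \le n} a_j/(1-t)^j$ whose contribution to $Q(P)$ is a genuine polynomial (period $1$) of degree up to $n - 1$. This dominates all of your $p_e$ when $r \ge 2$, since $n - 1 = \bigl(\sum_e m_e\bigr) - 1 > \max_e (m_e - 1)$. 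Consequently $\deg Q(P)$ can be strictly larger than any $m_e - 1$, and in that range your iteration never terminates: subtracting off $q_e(t)/(1-t^e)^{m_e}$ for each $e$ lowers nothing, because all those summands have quasipolynomial degree $\le m_e - 1 < \deg Q(P)$. You cannot kill a degree-$d$ quasipolynomial with $d \ge \max_e m_e$ using only pure-power denominators $(1-t^e)^{\beta}$ with $\beta \le m_e$.

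This is exactly the case the paper must handle separately (Case (1) in the proof of Theorem~\ref{thm:rat}): when $\beta := \deg Q(P) + 1 > \max_i \alpha_i$, the leading coefficient is forced to be constant, and one matches it by choosing exponents $0 \le \beta_i \le \alpha_i$ with $\sum_i \beta_i = \beta$ and taking
\[
g(t) = \frac{\lambda}{\prod_{i=1}^r (1-t^{\delta_i})^{\beta_i}}.
\]
The point is that this $g$ necessarily has a \emph{mixed} denominator: since each $\beta_i \le \alpha_i < \beta$, at least two $\beta_i$ are nonzero, so the corresponding set $I$ of factors $(1-t^{d_i})$ mixes two different degrees. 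Without this step you have no way to produce a summand whose quasipolynomial has degree $\beta - 1$ and constant leading coefficient, and the induction stalls. To repair your proof, you must put back the omitted $p_1$ block (the polynomial part of $Q(P)$ coming from the pole at $t = 1$) and, for the top degree whenever $\deg Q(P) \ge \max_e m_e$, use a mixed-denominator $g$ as above; the remaining case $\deg Q(P) < \max_e m_e$ is where your Lemma~\ref{lem:periodic} + Lemma~\ref{lem:construct} step applies as written.
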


Let us introduce some more notation to simplify the presentation of the proof.
Let $\delta_1, \dotsc, \delta_r \in \NN$ denote the different values of the $d_i$, and let $\alpha_i := |\set{j\with d_j = \delta_i}|$ be the multiplicity of $\delta_i$.
Then $P(t)$ is a rational function with denominator $\prod_i (1-t^{\delta_i})^{\alpha_i}$.
From some power of $t$ on, the coefficients of $P$ are given by a quasipolynomial which we denote by $\Q{P}$ (cf.~\cite[Prop.~4.4.1]{stanley}).

\begin{proof}
	We proceed by induction on $\beta \defa \deg \Q{P} + 1$.
	If $\Q{P} = 0$, then $P$ is a polynomial and there is nothing to be proven.
	So from now on we assume that $\Q{P} \neq 0$.
	Using that the $\delta_i$ are pairwise coprime, we compute a partial fraction decomposition of $P$ as follows:
	\begin{align*}
		P(t) &= \frac{p(t)}{(1-t^{\delta_1})^{\alpha_1}\dotsm(1-t^{\delta_r})^{\alpha_r}} 
		= \frac{p(t)}{(1-t)^n \prod_{i=1}^{r}(\sum_{j=0}^{\delta_i-1} t^j)^{\alpha_i}} \\
		&= \frac{p_{0}(t)}{(1-t)^n} + \sum_{i=1}^r \frac{p_{i}(t)}{(\sum_{j=0}^{\delta_i-1} t^j)^{\alpha_i}} \
		= \frac{p_{0}(t)}{(1-t)^n} + \sum_{i=1}^r \frac{p_{i}(t)(1-t)^{\alpha_i}}{(1-t^{\delta_i})^{\alpha_i}};
	\end{align*}
	here, $p, p_0, p_1, \dots, p_r \in \QQ[t,t^{-1}]$.
	Expanding the last expression into a series yields a decomposition
	\begin{equation}\label{eq:dec}
		\Q{P} = q_0 + q_1 + \dotsc + q_r
	\end{equation}
	of $\Q{P}$, where $q_0 \in \QQ[t]$ is a polynomial 
	and $q_i$ is a quasipolynomial of period $\delta_i$ and degree at most $\alpha_i-1$ for $1 \leq i \leq r$.  
	Note that this decomposition is not necessarily unique.
	
	Because $\Q{P}(h)$ is nonnegative for all $h \gg 0$, its leading coefficient $c$ is a nonnegative periodic function.
	There are two cases to distinguish:
	\begin{asparaenum}
		\item
		If $\beta > \max\set{\alpha_i \with 1 \leq i \leq r}$, then $c$ is determined by the first summand in \eqref{eq:dec}.
		In particular, $c$ is a constant function.
		In this case, choose numbers $0 \leq \beta_i \leq \alpha_i$ for $1 \leq i \leq r$ such that $\beta = \beta_1 + \dotsb + \beta_r$.
		Then the coefficient function of the series expansion of $1/\prod_i (1-t^{\delta_i})^{\beta_i}$ is a quasipolynomial of degree $\beta - 1$, and its leading coefficient function is constant.
		Thus there exists a nonnegative $\lambda \in \QQ$ such that $c$ equals the leading coefficient of $\Q{G}$ for
		\[
			G(t) \defa \frac{\lambda}{\prod_{i=1}^r (1-t^{\delta_i})^{\beta_i}}.
		\]
		\item If $\beta \leq \max\set{\alpha_i \with 1 \leq i \leq r}$, then $c$ is a sum of periodic functions of the periods $\delta_i$ for those $i$ where $\beta \leq \alpha_i$.
		By Lemma \ref{lem:periodic}, we can write $c$ as a sum of nonnegative functions $\tilde{c}_1, \dotsc, \tilde{c}_r: \NN \rightarrow \QQ$ of periods $\delta_1, \dotsc, \delta_r$, 
		where $\tilde{c}_i = 0$ if $\beta > \alpha_i$.
		By Lemma \ref{lem:construct}, there are nonnegative polynomials $\tilde{q}_1, \dots, \tilde{q}_r \in \QQ[t]$, such that $c$ is the leading coefficient of $\Q{G}$ for 
		\[
			G(t) \defa \sum_{i=1}^r \frac{\tilde{q}_i(t)}{(1-t^{\delta_i})^{\beta}}.
		\]
	\end{asparaenum}

	In both cases, $\Q{P}$ and $\Q{G}$ satisfy the hypotheses of Lemma \ref{lem:shift}.
	Hence, there exists a $k \in \NN$, such that $ \Q{P}(h) - \Q{G}(h-k) \geq 0$ for all $h \geq k$.
	By enlarging $k$, we may also assume that the coefficient of $t^h$ in $P$ is given by $\Q{P}(h)$ for all $h \geq k$. On the other hand, the coefficients of $G$ are
	given by $\Q{G}(h)$ for all $h \geq 0$.

	Thus, by construction the coefficient of $t^h$ in the series $P' := P - t^k G$ is given by the corresponding coefficient in $P$ for $h < k$ and by $\Q{P}(h) - \Q{G}(h-k)$ for $h \geq k$.
	In particular, $P'$ has nonnegative coefficients.
	But $\deg \Q{P'} < \deg\Q{P}$, so the claim follows by induction.
\end{proof}

\begin{corollary}\label{cor:rat}
	Let $P \in \LS$ be a formal Laurent series satisfying the assumptions of Theorem \ref{thm:rat}. Then there exist a $\lambda \in \NN$ and a finitely generated $R$-module $M$,
	 such that $\lambda P$ is the Hilbert series of $M$.
\end{corollary}
\begin{proof}
	This follows from Theorem \ref{thm:rat} and Theorem \ref{thm:jan}.
\end{proof}

\begin{theorem}\label{thm:2var}
	Assume that in the situation of Theorem \ref{thm:rat} we have $n = 2$.
	Then the numerator polynomials $Q_I$ can be chosen to have nonnegative \emph{integral} coefficients.
	In particular, $P$ can be realized as a Hilbert series of a finitely generated graded $R$-module.
\end{theorem}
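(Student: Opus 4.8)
The plan is to rerun the induction in the proof of Theorem~\ref{thm:rat} on the quantity $\beta = \deg Q(P)+1$, checking at each step that the auxiliary rational function $g(t)$ can be chosen with \emph{nonnegative integral} numerators; once this is done, the last sentence of the statement follows from Theorem~\ref{thm:jan}. Since $n=2$, the pole of $P(t)$ at $t=1$ has order at most $2$, so $\beta\le 2$ and the recursion has depth at most two, the base case $\beta=0$ (where $P$ is a nonnegative integral Laurent polynomial) being trivial. Two observations make the integrality work. First, the integral part of Lemma~\ref{lem:periodic} produces \emph{integer}-valued summands $\tilde c_1,\tilde c_2$ whenever the function being split takes nonnegative integral values. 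Second --- and this is where the hypothesis $n=2$ is used --- for an integral series $P$ the values of the leading coefficient of the quasipolynomial $Q(P)$ have small enough denominators to be cleared by the universal rational constant that Lemma~\ref{lem:construct} introduces in the numerator of $g$ (respectively, by $d_1d_2$ in the explicit $g$ of the first case of that proof).

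I would go through the three relevant configurations. If $d_1=d_2=d$, one is always in the second case, with $g(t)=q_1(t)/(1-t^d)^\beta$; inspecting the proof of Lemma~\ref{lem:construct}, the numerator $q_1$ is a fixed positive rational multiple $\nu$ of $\sum_j c(j)\,t^j$, where $c$ denotes the leading coefficient of $Q(P)$, and a $(\beta-1)$-fold finite-difference computation along the arithmetic progressions of step $d$ shows, using that $P$ has integral coefficients, that $\nu\,c(j)\in\ZZ$ for every $j$, so $q_1\in\NN[t]$. If $\gcd(d_1,d_2)=1$ and $\beta=1$, one is again in the second case, with $g(t)=q_1(t)/(1-t^{d_1})+q_2(t)/(1-t^{d_2})$: here $c=Q(P)$ is an integer-valued periodic function which, by the partial-fraction expansion (the only common root of $t^{d_1}-1$ and $t^{d_2}-1$ being $t=1$), is a sum of a $d_1$-periodic and a $d_2$-periodic rational function, so Lemma~\ref{lem:periodic} yields integer-valued $\tilde c_1,\tilde c_2$ and Lemma~\ref{lem:construct} with $\beta=1$ returns $q_i=\sum_j\tilde c_i(j)\,t^j\in\NN[t]$ (the relevant constant $\nu$ being $1$). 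Finally, if $\gcd(d_1,d_2)=1$ and $\beta=2$, one is in the first case, with $g(t)=\lambda\bigl/\bigl((1-t^{d_1})(1-t^{d_2})\bigr)$; here $g$ must be chosen so that $Q(g)$ has leading coefficient $c$, and since the coefficient function of $1/\bigl((1-t^{d_1})(1-t^{d_2})\bigr)$ has as leading coefficient the constant $1/(d_1d_2)$, this forces $\lambda = c\,d_1d_2$; as $c\ge 0$ and $Q(P)(h)=ch+(\text{a }d_1d_2\text{-periodic function})$ is integer-valued, we get $c\,d_1d_2=Q(P)(h+d_1d_2)-Q(P)(h)\in\ZZ$, hence $\lambda\in\NN$.

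In each case $g(t)$ then has nonnegative integral numerators, and $Q(g)$ has the same degree and leading coefficient as $Q(P)$; hence by Lemma~\ref{lem:shift} --- choosing the shift $k$ large enough, exactly as in the proof of Theorem~\ref{thm:rat} --- the series $P(t)-t^kg(t)$ again has nonnegative coefficients, which are now also integral, is rational with a denominator dividing $(1-t^{d_1})(1-t^{d_2})$, and has a coefficient quasipolynomial of strictly smaller degree, so the induction hypothesis applies and the decomposition is complete. The only genuinely new input beyond Theorem~\ref{thm:rat} is the denominator bound on the leading coefficient of $Q(P)$, and this is precisely what breaks down once the hypotheses on $n$ and on the degrees are dropped --- the examples of Section~\ref{section:examples} do not admit a decomposition~\eqref{2eq1} at all, so the integral refinement is genuinely special to the two-variable case with coprime or equal degrees.
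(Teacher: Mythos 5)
Your proposal is correct and follows essentially the same strategy as the paper: descend on $\beta = \deg Q(P)+1$ by subtracting a suitably shifted rational function $t^k g(t)$ whose quasipolynomial matches $Q(P)$ in degree and leading coefficient, and verify at each step that $g$ can be taken with nonnegative integer numerators. The paper does not phrase this as a re-run of the Theorem~\ref{thm:rat} induction, but the content is the same: for $\deg Q(P)=0$ both arguments apply partial fractions and the integral half of Lemma~\ref{lem:periodic}, and for $\deg Q(P)=1$ both arguments reduce to showing $\lambda = c_1(P)\,d_1d_2\in\NN$. The only genuine variation is \emph{how} this last integrality is established: you take the finite difference $Q(P)(h+d_1d_2)-Q(P)(h)=c_1(P)\,d_1d_2$, whereas the paper observes directly that the numerator $p(t)=P(t)(1-t^{d_1})(1-t^{d_2})$ has integer coefficients and hence $c_1(P)=\bigl(\sum_i a_i\bigr)/(d_1d_2)$. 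These are two views of the same fact. You also handle the $d_1=d_2$ case by the same finite-difference computation rather than citing \cite[Thm.~2.1]{U} as the paper does; note that this case requires the constant extracted from the proof of Lemma~\ref{lem:construct} to be $(\beta-1)!\,\delta^{\beta-1}$ rather than the $(\beta-1)!$ literally stated there (the leading coefficient of $Q\bigl(t^i/(1-t^\delta)^\beta\bigr)$ in the variable $m$ is $e_{\delta,i}/\bigl((\beta-1)!\,\delta^{\beta-1}\bigr)$), which is exactly the denominator your finite differences clear --- your phrase ``the universal rational constant that Lemma~\ref{lem:construct} introduces'' implicitly uses the corrected constant, and that is what makes the argument go through.
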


\begin{proof} 
As a notation, we write $c_i(P)$ for the $i$-th coefficient of $\Q{P}$.
  If the degrees are equal the problem can be reduced to the standard graded case, so the claim follows from \cite[Thm. 2.1]{U}.
	Therefore we may assume that $d_1 \neq d_2$. Since $n=2$,  $\Q{P}$ has degree at most $1$.
	If $\Q{P}=0$, then $P$ is a polynomial, so nothing is to be proven.
	Next we assume that $\deg \Q{P} = 0$.
	By a partial fraction decomposition of $P$ we see that it can be written in the form
	\begin{equation*}
		P(t) = \frac{p_1(t)}{1-t^{d_1}} + \frac{p_2(t)}{1-t^{d_2}}.
	\end{equation*}
	From this we read off that $c_0(P)$ is the sum of two periodic functions of period $d_1$ resp. $d_2$. By Lemma \ref{lem:periodic}, we can choose these functions to be nonnegative 
	and integer valued. In other words, there exist two polynomials $\tilde{p}_1, \tilde{p}_2 \in \ZZ[t]$ with nonnegative coefficients such that 
	\[ c_0(P) = c_0\left( \frac{\tilde{p}_1(t)}{1-t^{d_1}} + \frac{\tilde{p}_2(t)}{1-t^{d_2}} \right), \]
	so by subtracting a suitable shift of this rational function from $P(t)$ we reduce to the case of a polynomial.
	
	 Finally we consider the case of $\deg \Q{P} = 1$.
	Let us write
	\begin{equation}\label{eq:pq} 
	P(t) = \frac{p(t)}{(1-t^{d_1})(1-t^{d_2})}
	\end{equation}
	with $p(t) \in \QQ[t,t^{-1}]$. First, we show that the coefficients of $p(t)$ are integers.
	For this, let $p(t) = \sum_i a_i t^i$ and write $P(t) = \sum_{j \geq 0} f_j t^j$.
	It follows from \eqref{eq:pq} that 
\[
a_i = f_i - f_{i-d_1} - f_{i-d_2} + f_{i - d_1 d_2} \in \ZZ.
\]
	It is not difficult to see that
	\[ c_1\left(\frac{t^i}{(1-t^{d_1})(1-t^{d_2})}\right) = \frac{1}{d_1 d_2} \]
	for all $i$, and in particular this coefficient function is constant.
	As the coefficients of $p(t)$ are integers, it follows that $c_1(P)$ is an integral multiple of $1/d_1d_2$.
	Hence there exists $\lambda \in \NN$ such that
	\[ P'(t) := P(t) - \frac{\lambda t^k}{(1-t^{d_1})(1-t^{d_2})} \]
	satisfies $\deg \Q{P'} = 0$. Moreover, Lemma \ref{lem:shift} implies that the coefficients of the series expansion of $P'$ are nonnegative for $k \gg 0$. 
	Thus we have reduced the claim to the previous case.
\end{proof}

\section{Counterexamples} \label{section:examples}
The decomposition is not always possible with integral coefficients.
We describe a general construction of counterexamples.
For this we consider pairwise coprime numbers $\delta_1, \dotsc, \delta_r \in \NN$ and exponents $\alpha_1, \dotsc, \alpha_r \in \NN$.
Consider two rational functions $P_1, P_2$ of the form
\[ \frac{1}{\prod_i (1-t^{\delta_i})^{\beta_i}} \]
with $0 \leq \beta_i \leq \alpha_i$.
Assume $P_1$ and $P_2$ have the following properties:
\begin{itemize}
	\item[(i)] $\deg \Q{P_1} = \deg \Q{P_2}$. Let us call this number $d$.
	\item[(ii)] $d+1 > \max \set{\alpha_1,\dotsc,\alpha_r}$. This ensures that the leading coefficients $c_{d}(P_1)$ and $c_{d}(P_2)$ are constant.
	\item[(iii)] $c_d(P_1) > c_d(P_2)$, and the former should not be a multiple of the latter. 
\end{itemize}
Under these assumptions, it is easy to see that there exists a $\lambda \in \NN$, such that
$\tilde{P} := P_1 - \lambda P_2$ is a series, so that $c_d(\tilde{P})$ is smaller than $c_d(P_2)$.
This series may have negative coefficients. But by Lemma \ref{lem:shift} we may instead consider $P := P_1 - \lambda t^k P_2$ for a sufficiently large $k \in \NN$, and this series has nonnegative coefficients.

Now assume additionally that $c_d(P_2)$ is the minimal leading coefficient of all series of the given type and dimension. Then it is immediate that $P$ cannot be written as a nonnegative integral linear combination of such series. We give two explicit examples of this behaviour.

\begin{example} \label{ex3.1}
Consider the rational function
\begin{align*}
P(t) &\defa \frac {1}{(1-t^2)(1-t^5)}-\frac {t^4}{(1-t^3)(1-t^5)} \\
&= \frac{1}{2} \left(1 + t^2 +\frac{t^6}{1-t^2} +\frac{t^2}{1-t^3}+ \frac{1+t^6}{1-t^5} + \frac{t^{12}}{(1-t^3)(1-t^5)} \right).
\end{align*}
One can read off from the first line that the leading coefficient of $\Q{P}$ is $1/10 - 1/15 = 1/30$, and thus smaller than $1/15$.
So by the argument given above, $P(t)$ cannot be written as a nonnegative integral linear combination.
On the other hand, the second line gives a rational decomposition. This shows in particular that the coefficients of the series of $P$ are nonnegative.
\end{example}

\begin{example}
The same phenomenon occurs in the case that there are only two different degrees, say $2$ and $3$, but $\alpha_1, \alpha_2 > 1$. As an explicit example consider the following rational function:
\begin{align*}
P &\defa \frac{1}{(1-t^2)^2 (1-t^3)} - \frac{t^2}{(1-t^2) (1-t^3)^2} \\
&= \frac{1}{2}\left(  \frac{1}{1-t^3}+ \frac{1}{(1-t^2)^2} + \frac{t^3}{(1-t^3)^2} + \frac{t^4}{(1-t^2)(1-t^3)^2} \right).
\end{align*}
\end{example}

\begin{example} \label{ex3.3}
The condition that the degrees $\delta_1, \dotsc, \delta_r$ are pairwise coprime is essential, as the following example shows.
Consider the rational function
\begin{align*}
	P(t) &\defa \frac{1 + t - t^6 - t^{10} - t^{11} - t^{15} + t^{20} + t^{21}}{(1-t^6)(1-t^{10})(1-t^{15})} \\
	&= \frac{1+t+t^7+t^{13}+t^{19}+t^{20}}{1-t^{30}}.
\end{align*}
One can read off from the second line that $P(t)$ cannot be written as a sum with positive coefficients and the required denominator: The coefficient of $t^0$ is $1$, but the terms $t^6, t^{10}$ and $t^{15}$ all have coefficient zero.
\end{example}

\end{document}